\documentclass[english]{extarticle}
\usepackage[T1]{fontenc}
\usepackage[latin9]{inputenc}
\usepackage{verbatim}
\usepackage{amsmath}
\usepackage{amsthm}
\usepackage{amssymb}

\makeatletter
\numberwithin{equation}{section}
\numberwithin{figure}{section}
\theoremstyle{plain}
\newtheorem{thm}{\protect\theoremname}
  \theoremstyle{remark}
  \newtheorem{rem}[thm]{\protect\remarkname}
  \theoremstyle{plain}
  \newtheorem{cor}[thm]{\protect\corollaryname}
  \theoremstyle{definition}
  \newtheorem{example}[thm]{\protect\examplename}
  \theoremstyle{definition}
  \newtheorem{defn}[thm]{\protect\definitionname}


\author{Lin Jiu \\ 
Research Institute for Symbolic Computation\\
Johannes Kepler University \\
4040 Linz, Austria\\
ljiu@risc.uni-linz.ac.at
}

\date{}

\makeatother

\usepackage{babel}
  \providecommand{\corollaryname}{Corollary}
  \providecommand{\definitionname}{Definition}
  \providecommand{\examplename}{Example}
  \providecommand{\remarkname}{Remark}
\providecommand{\theoremname}{Theorem}

\begin{document}

\title{Integral Representations of Equally Positive Integer-Indexed Harmonic
Sums at Infinity\thanks{This work is supported by the Austrian Science Fund (FWF) grant SFB
F50 (F5006-N15 and F5009-N15)}}
\maketitle
\begin{abstract}
We identify a partition-theoretic generalization of Riemann zeta function
and the equally positive integer-indexed harmonic sums at infinity,
to obtain the generating function and the integral representations
of the latter. The special cases coincide with zeta values at positive
integer arguments.
\end{abstract}
\emph{Keywords: harmonic sum, integral representation, zeta value
}%

\section{\label{sec:Introduction}Introduction}

The harmonic sum of \emph{indices} $a_{1},\ldots,a_{k}\in\mathbb{R}\backslash\left\{ 0\right\} $
is defined as (see \cite[eq.~4, pp.~1]{HarmonicSSumDEF})
\[
S_{a_{1},\ldots,a_{k}}\left(N\right)=\sum_{N\geq n_{1}\geq\cdots\ge n_{k}\ge1}\frac{\text{sign}\left(a_{1}\right)^{n_{1}}}{n_{1}^{\left|a_{1}\right|}}\times\cdots\times\frac{\text{sign}\left(a_{k}\right)^{n_{k}}}{n_{k}^{\left|a_{k}\right|}},
\]
which is naturally connected to the Riemann zeta function, by noting
that $N=\infty$, $k=1$ and $a_{1}>0$ gives $S_{a_{1}}\left(\infty\right)=\zeta\left(a_{1}\right)$.
A variety of the study can be found in the literature. For instance,
Hoffman \cite{Hoffman} established the connection between harmonic
sums and multiple zeta values. We especially focus on the\emph{ equally
positively indexed harmonic sums}, given by the case $a_{1}=\cdots=a_{k}=a>0$
\begin{equation}
S_{\boldsymbol{a}_{k}}\left(N\right):=S_{\underset{k}{\underbrace{a,\ldots,a}}}\left(N\right)=\sum_{N\geq n_{1}\geq\cdots\ge n_{k}\ge1}\frac{1}{\left(n_{1}\cdots n_{k}\right)^{a}},\label{eq:HarmonicSSum}
\end{equation}
and also the \emph{equally positive integer-indexed harmonic sums
}(EPIIHS), namely $a=m\in\mathbb{Z}_{>0}$. If $N=\infty$, we additionally
assume $m\in\mathbb{Z}_{>1}$ for convergence.

Recently, Schneider \cite{PartitionZeta} studied the generalized
$q$-Pochhammer symbol and obtained \cite[pp.~3]{PartitionZeta}
\begin{equation}
\prod_{n\in X}\frac{1}{1-f\left(n\right)q^{n}}=\sum_{\lambda\in\mathcal{P}_{X}}q^{\left|\lambda\right|}\prod_{\lambda_{i}\in\lambda}f\left(\lambda_{i}\right),\label{eq:subPartitionZeta}
\end{equation}
where
\begin{itemize}
\item $X\subseteq\mathbb{\mathbb{Z}}_{>0}$ and $f:\mathbb{\mathbb{Z}}_{>0}\longrightarrow\mathbb{C}$
such that if $n\not\in X$ then $f\left(n\right)=0$;
\item $\mathcal{P}_{X}$ is the set of partitions into elements of $X$;
\item $\lambda\vdash n$ means $\lambda$ is a partition of $n$, the size
$|\lambda|$ is the sum of the parts of $\lambda$, i.e., the number
$n$ being partitioned, and $\lambda_{i}\in\lambda$ means $\lambda_{i}\in\mathbb{Z}_{>0}$
is a part of partition $\lambda$. 
\end{itemize}
Further define $l\left(\lambda\right):=k$, $n_{\lambda}:=\lambda_{1}\cdots\lambda_{k}$
and denote $\mathcal{P}:=\mathcal{P}_{\mathbb{Z}_{>0}}$. Noting $\lambda_{1}\geq\cdots\geq\lambda_{k}\geq1$,
a \emph{partition-theoretic generalization of Riemann zeta function
}\cite[eq.~11, pp.~4]{PartitionZeta} is defined and identified as
\begin{equation}
\zeta_{\mathcal{P}}\left(\left\{ a\right\} ^{k}\right):=\sum_{l\left(\lambda\right)=k}\frac{1}{n_{\lambda}^{a}}=\sum_{\lambda_{1}\geq\cdots\geq\lambda_{k}\geq1}\frac{1}{\lambda_{1}^{a}\cdots\lambda_{k}^{a}}=S_{\boldsymbol{a}_{k}}\left(\infty\right),\label{eq:partition_zeta_function}
\end{equation}
which leads to the generating function and the integral representation
of $S_{\boldsymbol{m}_{k}}\left(\infty\right)$, presented in the
next section.

\section{Main results}

We first apply (\ref{eq:subPartitionZeta}) to the case $X=\left\{ 1,2,\ldots,N\right\} $
and $f\left(n\right):=\frac{t^{a}}{n^{a}}$, obtaining
\[
\prod_{n=1}^{N}\frac{1}{1-\frac{t^{a}}{n^{a}}q^{n}}=\sum_{\lambda\in\mathcal{P}_{X}}q^{\left|\lambda\right|}\prod_{\lambda_{i}\in\lambda}\frac{t^{a}}{\lambda_{i}^{a}}=\sum_{\lambda\in\mathcal{P}_{X}}q^{\left|\lambda\right|}\frac{t^{l\left(\lambda\right)a}}{n_{\lambda}^{a}},
\]
which, by further letting $q\rightarrow1$, yields the following generating
function.
\begin{thm}
The generating function of $S_{\boldsymbol{a}_{k}}\left(N\right)$
is given by
\begin{equation}
\sum_{k=0}^{\infty}S_{\boldsymbol{a}_{k}}\left(N\right)t^{ak}=\prod_{n=1}^{N}\frac{n^{a}}{n^{a}-t^{a}}.\label{eq:SpecialApplicationSubsetPartition}
\end{equation}
\end{thm}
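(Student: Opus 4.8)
The plan is to take the limit $q\to1$ in the identity already produced from (\ref{eq:subPartitionZeta}), namely
\[
\prod_{n=1}^{N}\frac{1}{1-\frac{t^{a}}{n^{a}}q^{n}}=\sum_{\lambda\in\mathcal{P}_{X}}q^{\left|\lambda\right|}\frac{t^{l\left(\lambda\right)a}}{n_{\lambda}^{a}},
\]
and to read off coefficients after reorganizing the right-hand sum by partition length. First I would split the index set according to the number of parts, writing $\sum_{\lambda\in\mathcal{P}_{X}}=\sum_{k=0}^{\infty}\sum_{l(\lambda)=k}$, so that the right-hand side becomes $\sum_{k=0}^{\infty}t^{ak}\sum_{\substack{l(\lambda)=k\\\lambda_{i}\le N}}q^{|\lambda|}/n_{\lambda}^{a}$. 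The key observation is that a partition $\lambda$ with exactly $k$ parts, all at most $N$, is precisely a weakly decreasing tuple $N\ge\lambda_{1}\ge\cdots\ge\lambda_{k}\ge1$, which is exactly the summation range in the definition (\ref{eq:HarmonicSSum}) of $S_{\boldsymbol{a}_{k}}(N)$, and under this identification $n_{\lambda}=\lambda_{1}\cdots\lambda_{k}$. Hence, once $q$ is set to $1$, the inner sum collapses to exactly $S_{\boldsymbol{a}_{k}}(N)$.

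Next I would treat the two sides of the limit separately. The left-hand side is a finite product of $N$ rational factors, hence continuous at $q=1$ (the denominators $1-t^{a}q^{n}/n^{a}$ remain nonzero there as long as $t^{a}\ne n^{a}$), and it tends to $\prod_{n=1}^{N}\frac{1}{1-t^{a}/n^{a}}=\prod_{n=1}^{N}\frac{n^{a}}{n^{a}-t^{a}}$, the claimed right-hand side. On the series side, setting $q=1$ removes the factor $q^{|\lambda|}$ and leaves $\sum_{k=0}^{\infty}S_{\boldsymbol{a}_{k}}(N)t^{ak}$, the claimed left-hand side.

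The point that requires care — and the one I expect to be the real obstacle — is the legitimacy of passing to $q\to1$ termwise inside the infinite partition sum. I would restrict to the regime $|t^{a}|<1$ (equivalently $|t|<1$), in which the target series converges absolutely, since the product $\prod_{n=1}^{N}\frac{n^{a}}{n^{a}-t^{a}}$ has its nearest singularity at $t^{a}=1$, coming from the factor $n=1$. For real $t$ with $0<t^{a}<1$ and $0<q<1$ every summand is nonnegative, so the right-hand side is a power series in $q$ with nonnegative coefficients; Abel's limit theorem then guarantees that its value at $q=1$ equals $\lim_{q\to1^{-}}$, provided the former is finite, which is exactly the convergence just noted. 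This justifies the interchange and completes the identification.

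Alternatively, one may bypass $q$ entirely: expanding each factor as $\frac{1}{1-t^{a}/n^{a}}=\sum_{j\ge0}(t^{a}/n^{a})^{j}$ and multiplying out, the coefficient of $t^{ak}$ is the sum of $(n_{1}\cdots n_{k})^{-a}$ over all multisets of $k$ elements from $\{1,\ldots,N\}$, i.e. $S_{\boldsymbol{a}_{k}}(N)$ again. This route makes the convergence condition $|t^{a}|<1$ manifest and avoids invoking Abel's theorem.
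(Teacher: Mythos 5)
Your main argument is exactly the paper's proof: specialize Schneider's identity (\ref{eq:subPartitionZeta}) to $X=\{1,\ldots,N\}$ and $f(n)=t^{a}/n^{a}$, let $q\to1$, and identify the length-$k$ partitions with parts at most $N$ with the summation range of $S_{\boldsymbol{a}_{k}}(N)$; the paper simply states ``letting $q\rightarrow1$'' where you supply the Abel/monotone-convergence justification, which is a welcome addition rather than a deviation. Your closing alternative --- expanding each factor as a geometric series and collecting multisets, bypassing $q$ altogether --- is also correct and is in fact the more elementary self-contained route, but it is offered as a side remark and does not change the verdict that the proposal matches the paper's approach.
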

\begin{rem}
The special case for $a=1$ is \cite[eq.~9, pp.~1272]{BetaTaylorExpansion}
\begin{equation}
\sum_{k=0}^{\infty}t^{k}S_{\boldsymbol{1}_{k}}\left(N\right)=\frac{N!}{\left(1-t\right)\cdots\left(N-t\right)}=N\cdot B\left(N,1-t\right),\label{eq:Bluemlein}
\end{equation}
involving the beta function $B$, defined by
\begin{equation}
B\left(x,y\right):=\int_{0}^{1}z^{x-1}\left(1-z\right)^{y-1}dz=\frac{\Gamma\left(x\right)\Gamma\left(y\right)}{\Gamma\left(x+y\right)},\label{eq:BetaIntegralRepresentation}
\end{equation}
where the integral representation holds for $\text{Re}\left(x\right)$,
$\text{Re}\left(y\right)>0$. 
\end{rem}
\begin{cor}
For $m\in\mathbb{Z}_{>1}$, denote $\xi_{m}:=\exp\left(\frac{2\pi\text{i}}{m}\right)$
with $\text{i}^{2}=-1$. Then, 
\begin{equation}
\sum_{k=0}^{\infty}S_{\boldsymbol{m}_{k}}\left(\infty\right)t^{mk}=\prod_{j=0}^{m-1}\Gamma\left(1-\xi_{m}^{j}t\right).\label{eq:generatingFunction}
\end{equation}
\end{cor}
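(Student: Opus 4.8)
The plan is to specialize the generating function \eqref{eq:SpecialApplicationSubsetPartition} to $a=m\in\mathbb{Z}_{>1}$ and then pass to the limit $N\to\infty$. On the left-hand side, letting $N\to\infty$ termwise turns each $S_{\boldsymbol{m}_k}(N)$ into $S_{\boldsymbol{m}_k}(\infty)=\zeta_{\mathcal{P}}(\{m\}^k)$ via \eqref{eq:partition_zeta_function}, so the substance of the corollary is to evaluate $\lim_{N\to\infty}\prod_{n=1}^N\frac{n^m}{n^m-t^m}$ in closed form.

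First I would factor the denominator over the $m$-th roots of unity. Since the roots of $x^m=t^m$ are exactly $\xi_m^j t$ for $j=0,\dots,m-1$, we have $n^m-t^m=\prod_{j=0}^{m-1}(n-\xi_m^j t)$, while $n^m=\prod_{j=0}^{m-1}n$. Hence $\frac{n^m}{n^m-t^m}=\prod_{j=0}^{m-1}\frac{n}{\,n-\xi_m^j t\,}$, and, interchanging the two finite products, $\prod_{n=1}^N\frac{n^m}{n^m-t^m}=\prod_{j=0}^{m-1}\prod_{n=1}^N\frac{n}{\,n-\xi_m^j t\,}$. Next I would express each inner product through the Gamma function: telescoping $\Gamma(N+1-z)=(N-z)(N-1-z)\cdots(1-z)\Gamma(1-z)$ gives $\prod_{n=1}^N(n-z)=\Gamma(N+1-z)/\Gamma(1-z)$, so that $\prod_{n=1}^N\frac{n}{n-z}=\frac{N!\,\Gamma(1-z)}{\Gamma(N+1-z)}$ with $z=\xi_m^j t$. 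Multiplying over $j$ then isolates the target factor $\prod_{j=0}^{m-1}\Gamma(1-\xi_m^j t)$ times the residual $\prod_{j=0}^{m-1}\frac{N!}{\Gamma(N+1-\xi_m^j t)}$.

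The main obstacle is that this residual product does not behave well factor-by-factor: each ratio $N!/\Gamma(N+1-\xi_m^j t)=\Gamma(N+1)/\Gamma(N+1-\xi_m^j t)$ diverges like $N^{\xi_m^j t}$ by the standard asymptotic $\Gamma(N+a)/\Gamma(N+b)\sim N^{a-b}$, so no single inner product converges as $N\to\infty$. What rescues the computation is the vanishing of the sum of the $m$-th roots of unity: for $m>1$ one has $\sum_{j=0}^{m-1}\xi_m^j=0$, whence $\prod_{j}N^{\xi_m^j t}=N^{t\sum_j\xi_m^j}=N^0=1$, and the residual tends to $1$. I would make this precise by writing $\Gamma(N+1)/\Gamma(N+1-\xi_m^j t)=N^{\xi_m^j t}\,(1+O(1/N))$ and multiplying over $j$, so that the $N$-dependence collapses and the limit of the right-hand side is exactly $\prod_{j=0}^{m-1}\Gamma(1-\xi_m^j t)$.

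The remaining point to dispatch is the interchange of $\lim_{N\to\infty}$ with the series $\sum_k$ on the left-hand side of \eqref{eq:SpecialApplicationSubsetPartition}. For small real $t>0$ all the terms $S_{\boldsymbol{m}_k}(N)t^{mk}$ are nonnegative and nondecreasing in $N$, so monotone convergence justifies the passage to the limit, and the resulting identity \eqref{eq:generatingFunction} then extends to a full neighbourhood of $t=0$ by analyticity of both sides.
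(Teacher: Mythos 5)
Your proof is correct and takes essentially the same route as the paper: factoring $n^{m}-t^{m}$ over the $m$-th roots of unity, expressing the partial products as Gamma quotients, and invoking $\sum_{j=0}^{m-1}\xi_{m}^{j}=0$ to cancel the divergent powers $N^{\xi_{m}^{j}t}$ --- indeed, your asymptotic $\Gamma\left(N+1\right)/\Gamma\left(N+1-z\right)\sim N^{z}$ is exactly the limit $\Gamma\left(z\right)=\lim_{N\rightarrow\infty}N^{z}B\left(N,z\right)$ that the paper cites from Whittaker--Watson, since $N\cdot B\left(N,z\right)=\Gamma\left(N+1\right)\Gamma\left(z\right)/\Gamma\left(N+z\right)$. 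Your final paragraph justifying the interchange of $\lim_{N\rightarrow\infty}$ with the sum over $k$ (monotone convergence for small real $t$, then analyticity) is a careful addition that the paper leaves implicit.
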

\begin{proof}
From (\ref{eq:SpecialApplicationSubsetPartition}) and (\ref{eq:Bluemlein}),
we have
\[
\sum_{k=0}^{\infty}S_{\boldsymbol{m}_{k}}\left(N\right)t^{mk}=\prod_{n=1}^{N}\frac{n^{m}}{\left(n-\xi_{m}^{0}t\right)\cdots\left(n-\xi_{m}^{m-1}t\right)}=\prod_{j=0}^{m-1}N\cdot B\left(N,1-\xi_{m}^{j}t\right).
\]
Then, apply the limit (see \cite[pp.~254, ex.~5]{ModernAnalysis})
$\Gamma\left(z\right)=\underset{n\rightarrow\infty}{\lim}N^{z}B\left(N,z\right)$
to $z_{j}=1-\xi_{m}^{j}t$, $j=0,\ldots,m-1$, by noting $\xi_{m}^{0}+\cdots+\xi_{m}^{m-1}=0$,
to complete the proof.
\end{proof}
\begin{rem}
An alternative proof can be given by letting $N=\infty$ in (\ref{eq:SpecialApplicationSubsetPartition})
and applying \cite[Thm.~1.1, pp.~547]{Armin}.
\end{rem}
\begin{rem}
For general $a>0$, we failed to obtain a closed form of $\overset{\infty}{\underset{n=1}{\prod}}\frac{n^{a}}{n^{a}-t^{a}}.$
\end{rem}
\begin{example}
\label{exa:mequals2} When $m=2$, we apply (\ref{eq:generatingFunction})
to get
\[
B\left(1+t,1-t\right)=\Gamma\left(1+t\right)\Gamma\left(1-t\right)=\sum_{k=0}^{\infty}S_{\boldsymbol{2}_{k}}\left(\infty\right)t^{2k}.
\]
From the integral representation (\ref{eq:BetaIntegralRepresentation}),
we obtain (also see Remark \ref{rem:Interchange})
\begin{equation}
B\left(1+t,1-t\right)=\int_{0}^{1}z^{t}\left(1-z\right)^{-t}dz=\sum_{k=0}^{\infty}\frac{t^{k}}{k!}\int_{0}^{1}\log^{k}\left(\frac{z}{1-z}\right)dz.\label{eq:Interchange}
\end{equation}
Then it follows, by comparing coefficients of $t$, 
\[
S_{\boldsymbol{2}_{k}}\left(\infty\right)=\frac{1}{\left(2k\right)!}\int_{0}^{1}\log^{2k}\left(\frac{z}{1-z}\right)dz.
\]
In particular, $k=1$ yields
\[
\frac{\pi^{2}}{6}=\zeta\left(2\right)=S_{2}\left(\infty\right)=\frac{1}{2}\int_{0}^{1}\log^{2}\left(\frac{z}{1-z}\right)dz.
\]
\end{example}
\begin{rem}
\label{rem:Interchange}We may interchange the integral and the sum
of the series in (\ref{eq:Interchange}), by restricting $t$ to a
closed compact set, e.g., $\left[-\frac{1}{2},\frac{1}{2}\right]$,
satisfying $\text{Re}\left(1-t\right)$, $\text{Re}\left(1+t\right)>0$
as that in (\ref{eq:BetaIntegralRepresentation}), in order to guarantee
uniform convergence of the integral representation. (Similar discussion
is omitted for the multiple beta function, defined next.)
\end{rem}
\begin{defn}
The \emph{multiple beta function }\cite[Ch.~49]{DirichletDistribution}
is defined as
\begin{equation}
B\left(\alpha_{1},\ldots,\alpha_{m}\right):=\frac{\Gamma\left(\alpha_{1}\right)\cdots\Gamma\left(\alpha_{m}\right)}{\Gamma\left(\alpha_{1}+\cdots+\alpha_{m}\right)}=\int_{\Omega_{m}}\prod_{i=1}^{m}x_{i}^{\alpha_{i}-1}d\mathbf{x},\label{eq:MultiBetaIntegralRepresentation}
\end{equation}
where $\Omega_{m}=\left\{ \left(x_{1},\ldots,x_{m}\right)\in\mathbb{R}_{>0}^{m}:x_{1}+\cdots+x_{m-1}<1,\ x_{1}+\cdots+x_{m}=1\right\} $
and the integral representation requires $\text{Re}\left(\alpha_{1}\right),\ldots,\text{Re}\left(\alpha_{m}\right)>0$.
\end{defn}
Following the same idea as that in Example \ref{exa:mequals2}, we
first have, from (\ref{eq:generatingFunction}),
\[
B\left(1-\xi_{m}^{0}t,\ldots,1-\xi_{m}^{m-1}t\right)=\frac{1}{\left(m-1\right)!}\sum_{k=0}^{\infty}S_{\boldsymbol{m}_{k}}\left(\infty\right)t^{mk}.
\]
Then, apply the integral representation (\ref{eq:MultiBetaIntegralRepresentation}),
expand the integrand as a power series in $t$, and compare coefficients
of $t$, to obtain the following integral representation.
\begin{thm}
For all $m,\,k\in\mathbb{Z}_{>0}$ with $m\geq2$,
\[
S_{\boldsymbol{m}_{k}}\left(\infty\right)=\frac{\left(-1\right)^{mk}\left(m-1\right)!}{\left(mk\right)!}\int_{\Omega_{m}}\log^{mk}\left(\prod_{j=0}^{m-1}x_{j+1}^{\xi_{m}^{j}}\right)d\mathbf{x}.
\]
\end{thm}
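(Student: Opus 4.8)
The plan is to convert the generating function \eqref{eq:generatingFunction} into a statement about the multiple beta function and then read off $S_{\boldsymbol{m}_{k}}\left(\infty\right)$ by comparing Taylor coefficients in $t$, exactly as was done for $m=2$ in Example \ref{exa:mequals2}. First I would set $\alpha_{j+1}:=1-\xi_{m}^{j}t$ for $j=0,\ldots,m-1$ in the definition \eqref{eq:MultiBetaIntegralRepresentation}. The crucial observation is that, because $\xi_{m}^{0}+\cdots+\xi_{m}^{m-1}=0$ for $m\geq2$, the denominator argument collapses:
\[
\alpha_{1}+\cdots+\alpha_{m}=m-t\sum_{j=0}^{m-1}\xi_{m}^{j}=m,
\]
so $\Gamma\left(\alpha_{1}+\cdots+\alpha_{m}\right)=\Gamma\left(m\right)=\left(m-1\right)!$. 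Combined with \eqref{eq:generatingFunction}, this yields the identity $B\left(1-\xi_{m}^{0}t,\ldots,1-\xi_{m}^{m-1}t\right)=\frac{1}{\left(m-1\right)!}\sum_{k=0}^{\infty}S_{\boldsymbol{m}_{k}}\left(\infty\right)t^{mk}$ already recorded before the statement.

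Next I would feed the same arguments into the integral side of \eqref{eq:MultiBetaIntegralRepresentation}. Since $\alpha_{i}-1=-\xi_{m}^{i-1}t$, the integrand becomes $\prod_{j=0}^{m-1}x_{j+1}^{-\xi_{m}^{j}t}$. Writing this as a single exponential,
\[
\prod_{j=0}^{m-1}x_{j+1}^{-\xi_{m}^{j}t}=\exp\!\left(-t\sum_{j=0}^{m-1}\xi_{m}^{j}\log x_{j+1}\right)=\exp\left(-t\,L\right),\qquad L:=\log\!\left(\prod_{j=0}^{m-1}x_{j+1}^{\xi_{m}^{j}}\right),
\]
and expanding the exponential gives $\sum_{n=0}^{\infty}\frac{\left(-1\right)^{n}t^{n}}{n!}L^{n}$. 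Interchanging the summation with $\int_{\Omega_{m}}$ then produces a power series in $t$ whose $t^{n}$-coefficient is $\frac{\left(-1\right)^{n}}{n!}\int_{\Omega_{m}}L^{n}\,d\mathbf{x}$.

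Finally I would equate the two power series. The left-hand side is a series in $t^{m}$ only, so matching the coefficient of $t^{mk}$ (i.e.\ taking $n=mk$) gives
\[
\frac{1}{\left(m-1\right)!}S_{\boldsymbol{m}_{k}}\left(\infty\right)=\frac{\left(-1\right)^{mk}}{\left(mk\right)!}\int_{\Omega_{m}}L^{mk}\,d\mathbf{x},
\]
which is the claimed formula after clearing $\left(m-1\right)!$. As a built-in consistency check, the coefficients of $t^{n}$ with $m\nmid n$ must vanish.

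The step I expect to need the most care is the interchange of $\sum_{n}$ and $\int_{\Omega_{m}}$: the integrand involves the complex power $x^{\xi_{m}^{j}}$, which is well defined because each $x_{j+1}\in\left(0,1\right]$ makes $\log x_{j+1}$ real, so $L$ is a fixed complex number and $e^{-tL}$ is entire in $t$. Restricting $t$ to a compact real interval as in Remark \ref{rem:Interchange} gives a uniform bound on $\left|e^{-tL}\right|$ that is integrable over $\Omega_{m}$, justifying term-by-term integration; this is the only analytic subtlety, the remainder being the algebra of roots of unity and coefficient extraction.
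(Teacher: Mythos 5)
Your proposal is correct and follows exactly the paper's route: it derives $B\left(1-\xi_{m}^{0}t,\ldots,1-\xi_{m}^{m-1}t\right)=\frac{1}{\left(m-1\right)!}\sum_{k=0}^{\infty}S_{\boldsymbol{m}_{k}}\left(\infty\right)t^{mk}$ from the generating function (\ref{eq:generatingFunction}) together with $\xi_{m}^{0}+\cdots+\xi_{m}^{m-1}=0$, then expands the integrand of (\ref{eq:MultiBetaIntegralRepresentation}) as an exponential series in $t$ and compares coefficients of $t^{mk}$, which is precisely the argument sketched in the paper before the theorem. Your additional care about justifying the interchange of sum and integral is consistent with the paper's Remark \ref{rem:Interchange}, which defers exactly this point.
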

\begin{cor}
In particular, the case $k=1$ implies for integer $m\in\mathbb{Z}_{>1}$
that
\[
\zeta\left(m\right)=\frac{\left(-1\right)^{m}}{m}\int_{\Omega_{m}}\log^{m}\left(\prod_{j=0}^{m-1}x_{j+1}^{\xi_{m}^{j}}\right)d\mathbf{x},
\]
or alternatively
\begin{eqnarray*}
\zeta\left(m\right) & = & \frac{\left(-1\right)^{m}}{m}\int_{0}^{1}\int_{0}^{1-x_{1}}\cdots\int_{0}^{1-x_{1}-\cdots-x_{m-2}}\\
 &  & \log^{m}\left(x_{1}^{\xi_{m}^{0}}\cdots x_{m-1}^{\xi_{m}^{m-2}}\left(1-x_{1}-\cdots-x_{m-1}\right)^{\xi_{m}^{m-1}}\right)dx_{m-1}\cdots dx_{1}.
\end{eqnarray*}
\end{cor}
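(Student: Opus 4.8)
The plan is to obtain the Corollary as a direct specialization of the preceding Theorem at $k=1$, supplemented by the identification of $S_{\boldsymbol{m}_1}\left(\infty\right)$ with $\zeta\left(m\right)$ and an explicit unfolding of the simplex integral. First I would set $k=1$ in the Theorem, which gives
\[
S_{\boldsymbol{m}_1}\left(\infty\right)=\frac{\left(-1\right)^{m}\left(m-1\right)!}{m!}\int_{\Omega_m}\log^{m}\left(\prod_{j=0}^{m-1}x_{j+1}^{\xi_m^j}\right)d\mathbf{x}.
\]
The left-hand side collapses to a single sum: by the definition (\ref{eq:HarmonicSSum}) with $k=1$ and $a=m$, we have $S_{\boldsymbol{m}_1}\left(\infty\right)=\sum_{n\geq 1}n^{-m}=\zeta\left(m\right)$, which is precisely the connection with the Riemann zeta function already noted in Section~\ref{sec:Introduction}. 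On the right, the factorial prefactor simplifies via $\left(m-1\right)!/m!=1/m$, so the constant becomes $\left(-1\right)^{m}/m$. Combining these two observations yields the first displayed formula of the Corollary.

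For the alternative form I would render the surface integral over $\Omega_m$ as an iterated integral by using the affine constraint to eliminate the last coordinate. Since $\Omega_m$ lies in the hyperplane $x_1+\cdots+x_m=1$, the standard Dirichlet parametrization takes $\left(x_1,\ldots,x_{m-1}\right)$ as free variables with $x_m=1-x_1-\cdots-x_{m-1}$ and $d\mathbf{x}=dx_1\cdots dx_{m-1}$. The positivity constraints $x_i>0$ together with $x_m>0$ translate into $0<x_1<1$, $0<x_2<1-x_1$, and so on down to $0<x_{m-1}<1-x_1-\cdots-x_{m-2}$, which are exactly the limits of integration appearing in the Corollary. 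Substituting $x_m=1-x_1-\cdots-x_{m-1}$ into the factor $x_m^{\xi_m^{m-1}}$ turns the product inside the logarithm into $x_1^{\xi_m^0}\cdots x_{m-1}^{\xi_m^{m-2}}\left(1-x_1-\cdots-x_{m-1}\right)^{\xi_m^{m-1}}$, giving the second displayed formula.

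The only point demanding care---rather than a genuine obstacle---is bookkeeping of the exponents: one must verify that when $x_m$ is singled out as the eliminated variable it carries the exponent $\xi_m^{m-1}$ (the $j=m-1$ term of the product), so that the surviving factors $x_1,\ldots,x_{m-1}$ inherit the exponents $\xi_m^0,\ldots,\xi_m^{m-2}$ in that order. This matches the indexing $x_{j+1}^{\xi_m^j}$ of the Theorem and confirms that the alternative form is a faithful rewriting, not a reindexing that would silently permute the roots of unity. No convergence or interchange issues arise here, since the $k=1$ specialization already sits inside the regime handled by the Theorem.
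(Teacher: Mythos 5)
Your proposal is correct and is essentially the argument the paper intends: the Corollary is a direct specialization of the Theorem at $k=1$, using $S_{\boldsymbol{m}_{1}}\left(\infty\right)=\zeta\left(m\right)$ (as noted in Section \ref{sec:Introduction}), the simplification $\left(m-1\right)!/m!=1/m$, and the standard parametrization of $\Omega_{m}$ by $\left(x_{1},\ldots,x_{m-1}\right)$ with $x_{m}=1-x_{1}-\cdots-x_{m-1}$. Your exponent bookkeeping for the eliminated variable $x_{m}$ carrying $\xi_{m}^{m-1}$ is also the correct reading of the Theorem's indexing.
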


\section{Acknowledgment}

The author would like to thank Dr.~Jakob Ablinger for his help on
harmonic sums; Prof.~Johannes Bl\"{u}mlein for his handwritten notes
on the proof of (\ref{eq:Bluemlein}); and especially his mentors,
Prof.~Peter Paule and Prof.~Carsten Schneider, for their valuable
suggestions.

\end{document}